\numberwithin{equation}{section}
\theoremstyle{plain}
\newtheorem{theorem}{Theorem}[section]
\theoremstyle{remark}
\newtheorem{remark}{Remark}[section]
\DeclareMathOperator{\td}{d}
\begin{document}

\title[Some inequalities for the trigamma function]
{Some inequalities for the trigamma function in terms of the digamma function}

\author[F. Qi]{Feng Qi}
\address[Qi]{Institute of Mathematics, Henan Polytechnic University, Jiaozuo City, Henan Province, 454010, China; College of Mathematics, Inner Mongolia University for Nationalities, Tongliao City, Inner Mongolia Autonomous Region, 028043, China; Department of Mathematics, College of Science, Tianjin Polytechnic University, Tianjin City, 300387, China}
\email{\href{mailto: F. Qi <qifeng618@gmail.com>}{qifeng618@gmail.com}, \href{mailto: F. Qi <qifeng618@hotmail.com>}{qifeng618@hotmail.com}, \href{mailto: F. Qi <qifeng618@qq.com>}{qifeng618@qq.com}}
\urladdr{\url{http://qifeng618.wordpress.com}}

\author[C. Mortici]{Cristinel Mortici}
\address[Mortici]{Department of Mathematics, Valahia University of T\^argovi\c{s}te, Bd. Unirii 18, 130082 T\^argovi\c{s}te, Romania; Academy of Romanian Scientists, Splaiul Independen \c{t}ei 54, 050094 Bucharest, Romania}
\email{\href{mailto: C. Mortici <cristinel.mortici@hotmail.com>}{cristinel.mortici@hotmail.com}, \href{mailto: C. Mortici <cmortici@valahia.ro>}{cmortici@valahia.ro}}
\urladdr{\url{http://www.cristinelmortici.ro}}

\begin{abstract}
In the paper, the authors establish three kinds of double inequalities for the trigamma function in terms of the exponential function to powers of the digamma function. These newly established inequalities extend some known results. The method in the paper utilizes some facts from the asymptotic theory and is a natural way to solve problems for approximating some quantities for large values of the variable.
\end{abstract}

\keywords{inequality; digamma function; trigamma function; exponential function; asymptotic series}

\subjclass[2010]{Primary 33B15; Secondary 26A48, 26D15, 26D20}

\thanks{This paper was typeset using \AmS-\LaTeX}

\maketitle

\section{Motivations and main results}

The classical Euler gamma function may be defined for $\Re(z)>0$ by
\begin{equation*}
 \Gamma(z)= \int_{0}^{\infty}t^{z-1}e^{-t} \td t.
\end{equation*}
The logarithmic derivative of the gamma function $\Gamma(z)$ is denoted by
\begin{equation*}
\psi(z)=\frac{\td}{\td z}[ \ln \Gamma(z)]=\frac{\Gamma'(z)}{\Gamma(z)}
\end{equation*}
and called the digamma function.
The derivatives $\psi'(z)$ and $\psi''(z)$ are called the trigamma and tetragamma functions respectively. As a whole, the functions $\psi^{(k)}(z)$ for $k\in\{0\}\cup\mathbb{N}$ are called the polygamma functions.
These functions are widely used in theoretical and practical problems in all branches of mathematical science. Consequently, many mathematicians were preoccupied to establish new results about the gamma function, polygamma functions, and other related functions.

\subsection{The first main result}

In 2007, Alzer and Batir~\cite[Corollary]{alz} discovered that the double inequality
\begin{equation}\label{alzer-batir-ineq-aml}
 \sqrt{2\pi} \,x^x \exp \biggl[-x-\frac12\psi(x+\alpha)\biggr]< \Gamma(x) < \sqrt{2\pi} \,x^x \exp \biggl[-x-\frac12\psi(x+ \beta) \biggr]
\end{equation}
holds for $x>0$ if and only if $\alpha \ge\frac13$ and $\beta \le0$. For information on generalizations of results in the paper~\cite{alz}, please refer to~\cite{Convexity2CM.tex, CM-Tri-Gamma.tex} and references cited therein.
Motivated by the double inequality~\eqref{alzer-batir-ineq-aml}, Mortici~\cite{m1} proposed the asymptotic formula
\begin{equation*}
 \Gamma(x) \sim \sqrt{2\pi}e^{-b}(x+b) ^{x} \exp \biggl[-x-\frac12\psi(x+c) \biggr], \quad x \to\infty
\end{equation*}
and then determined the optimal values of parameters $b$, $c$ in such a way that this convergence is the fastest possible.
\par
In 2004, Batir~\cite[Theorem 2.1]{batir0} presented the double inequality
\begin{multline} \label{bb}
 \Gamma(c) \exp[\psi(x)e^{\psi(x)}-e^{\psi(x)}+1] \le \Gamma(x) \\*
 \le \Gamma(c) \exp \biggl\{\frac{6e^{\gamma}}{\pi^2} \bigl[\psi(x) e^{\psi(x)}-e^{\psi(x)}+1 \bigr] \biggr\},
\end{multline}
for every $x \ge c$, where $c=1.461 \dotsc$ is the unique positive zero of the digamma function $\psi$ and $\gamma=0.577 \dotsc$ is the Euler-Mascheroni constant. In 2010, Mortici~\cite{m2} discovered the asymptotic formula
\begin{equation*}
 \Gamma(x) \sim\frac{\sqrt{2\pi}}{e} \exp \bigl[\psi(x)e^{\psi(x)}-e^{\psi(x)}+1 \bigr], \quad x \to\infty.
\end{equation*}
In 2011,  the double inequality~\eqref{bb} was generalized by~\cite[Theorem~2]{notes-best-simple-open-jkms.tex} to a monotonicity property which reads that the function
\begin{equation*}
f_{s,t}(x)=\begin{cases}\displaystyle
\frac{g_{s,t}(x)}{[g'_{s,t}(x)-1]\exp[g'_{s,t}(x)]+1},&x\ne c\\
\dfrac1{g''_{s,t}(c)},&x=c
\end{cases}
\end{equation*}
on $x\in(-\alpha,\infty)$ is decreasing for $\lvert t-s\rvert<1$ and increasing for $\lvert t-s\rvert>1$, where $s$ and $t$ are real numbers, $\alpha=\min\{s,t\}$, $c\in(-\alpha,\infty)$, and
\begin{equation*}
g_{s,t}(x)=\begin{cases}\displaystyle
\frac1{t-s} \int_c^x\ln\biggl[\frac{\Gamma(u+t)}{\Gamma(u+s)} \frac{\Gamma(c+s)}{\Gamma(c+t)}\biggr]\td u,&s\ne t\\
\displaystyle
\int_c^x[\psi(u+s)-\psi(c+s)]\td u,&s=t
\end{cases}
\end{equation*}
on $x\in(-\alpha,\infty)$.
\par
In 2000, Elezovi\'c, Giordano, and Pe\v{c}ari\'c~\cite{ele} found the single-sided inequality
\begin{equation} \label{ele}
\psi'(x)<e^{-\psi(x)}, \quad x>0.
\end{equation}
This inequality is closely related to the monotonicity and convexity of the function
\begin{equation*}
\mathcal{Q}(x)=e^{\psi(x+1)}-x
\end{equation*}
on $(-1,\infty)$. See also~\cite{MIA-1729.tex} and plenty of references therein.
By the way, as a conjecture posed in~\cite[Remark~3.6]{Infinite-family-Digamma.tex}, the complete monotonicity of the function $\mathcal{Q}(x)$ on $(0,\infty)$ still keeps open. An infinitely differentiable function $f$ is said to be completely monotonic on an interval $I$ if it satisfies $(-1)^kf^{(k)}(x)\ge0$ on $I$ for all $k\ge0$. This class of functions has applications in the approximation theory, asymptotic analysis, probability, integral transforms, and the like. See~\cite[Chapter~14]{Cheney-Light-AMS-09}, \cite[Chapter~XIII]{mpf-1993}, \cite[Chapter~1]{Schilling-Song-Vondracek-2nd}, and~\cite[Chapter~IV]{widder}.
For more information on the functions
\begin{equation*}
Q'(x-1)=\psi'(x)e^{\psi(x)}-1 \quad \text{and}\quad Q''(x-1)=\big\{\psi''(x)+[\psi'(x)]^2\bigr\}e^{\psi(x)},
\end{equation*}
please see the papers~\cite{Yang-Fan-2008-Dec-simp.tex, Infinite-family-Digamma.tex, Guo-Qi-Srivasta-Unique.tex, UPB-1635.tex, x-4-q-di-tri-gamma-rcsm.tex, deg4tri-tetra-short.tex, Mortici-monoburn.tex, BAustMS-5984-RV.tex, notes-best-simple-cpaa.tex, AAM-Qi-09-PolyGamma.tex, SCM-2012-0142.tex, notes-best.tex, Bukac-Sevli-Gamma.tex, x-4-di-tri-gamma-p(x)-Slovaca.tex, x-4-di-tri-gamma-upper-lower-combined.tex}, the expository and survey articles~\cite{bounds-two-gammas.tex, Gautschi-Kershaw-TJANT.tex, Wendel-Gautschi-type-ineq-Banach.tex, Wendel2Elezovic.tex-JIA}, and a number of references cited therein.
In 2011, Batir~\cite[Theorem~2.7]{batir} obtained the double inequality
\begin{equation} \label{ab}
(x+a^{\ast})e^{-2\psi(x+1)}<\psi'(x+1) \le(x+b^{\ast})e^{-2\psi(x+1)}, \quad x>0,
\end{equation}
where the constants $a^{\ast}=\frac12$ and $b^{\ast}=\frac{\pi^2}{6e^{2\gamma}}=0.518 \dotsc$ are the best possible.
In other words, Batir~\cite{batir} proposed the approximation formula
\begin{equation} \label{a1}
\psi'(x+1) \sim(x+a)e^{-2\psi(x+1)},
\end{equation}
where $a$ is a constant. A numerical computation shows that the approximation~\eqref{a1} gives a better result when choosing $a=a^{\ast}$, rather than the value $a=b^{\ast}$. This fact is somewhat expected as Batir obtained~\eqref{ab} as a result of the decreasing monotonicity of the function
\begin{equation*}
 \theta(x)=\psi'(x+1)e^{2\psi(x+1)}-x
\end{equation*}
on $(0,\infty)$, with $\theta(0)=\frac{\pi^2}{6e^{2\gamma}}$ and $\theta(\infty)=\frac12$.
Hence, the function $\theta(x)$ becomes gradually closer to
$\theta(\infty)$, as $x$ approaches infinity.
The decreasing monotonicity of the function $\theta(x)$ may be rewritten and extended as that the function
\begin{equation*}%\label{rel-eq-f-3}
 \Theta(x)=\psi'(x)e^{2\psi(x)}-x+\frac12
\end{equation*}
is decreasing and positive, but not convex or concave, on $(0,\infty)$. In 2010, among other things, Qi and Guo~\cite[Corollary~2]{AAM-Qi-09-PolyGamma.tex} proved that the function
\begin{equation*}%\label{lambda-psi=x+1}
f_{p,q}(x)=e^{p\psi(x+1)}-q x
\end{equation*}
for $p \ne0$ and $q \in \mathbb{R}$ is strictly convex with respect to $x \in(-1,\infty)$ if and only if $p \ge1$ or $p<0$.
Since
\begin{equation}\label{mortici-20-eq}
\frac{\td e^{\alpha\psi(x)}}{\td x}= \alpha\psi'(x)e^{\alpha\psi(x)} \quad \text{and} \quad
\frac{\td^2e^{\alpha\psi(x)}}{\td x^2}
= \alpha \bigl\{\psi''(x)+ \alpha[\psi'(x)]^2 \bigr\}e^{\alpha\psi(x)},
\end{equation}
the above functions between~\eqref{ele} and~\eqref{mortici-20-eq} have something to do with the more general function
\begin{equation*}
f_{a,b,\alpha,\beta,\lambda}(x)=e^{a\psi(x+b)}+\alpha x^2+\beta x+\lambda
\end{equation*}
and its monotonicity and convexity, even its complete monotonicity.
\par
For obtaining accurate approximations of the type~\eqref{a1}, the value $a=\frac12$ should be used. However, approximations of the form
\begin{equation*}
\psi'(x+1) \sim[x+a(x)]e^{-2\psi(x+1)},
\end{equation*}
with $a(x) \to\frac12$ as $x$ tends to $\infty$, are much better. Furthermore, several experiments using some asymptotic expansions lead us to the claim that approximations of the type
\begin{equation*}
\psi'(x+1) \sim[x+a(x)] \exp \biggl[-2\psi(x+1)-\frac1{120x^4} \biggr]
\end{equation*}
are more accurate. Consequently, we obtain the following theorem.

\begin{theorem}\label{000-mortici-psy-thm1}
For $x \ge3$, the double inequality
\begin{multline} \label{aba}
[x+ \alpha(x)] \exp \biggl[-2\psi(x+1)-\frac1{120x^4} \biggr] \le\psi'(x+1) \\
 \le[x+ \beta(x)] \exp \biggl[-2\psi(x+1)-\frac1{120x^4} \biggr]
\end{multline}
is valid, where
\begin{equation*}
 \alpha(x)=\frac12+\frac1{90x^3}-\frac1{60x^4} \quad \text{and} \quad
 \beta(x)=\frac12+\frac1{90x^3}.
\end{equation*}
\end{theorem}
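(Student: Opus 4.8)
The plan is to recast the two-sided estimate~\eqref{aba} as a pair of one-sided inequalities for a single auxiliary function and then to control that function by means of the asymptotic expansions of the digamma and trigamma functions. For $x\ge3$ every factor in~\eqref{aba} is positive, so dividing by $\exp[-2\psi(x+1)-\frac1{120x^4}]$ and taking logarithms shows that~\eqref{aba} is equivalent to the chain
\begin{equation*}
\ln[x+\alpha(x)]\le D(x)\le\ln[x+\beta(x)],\qquad
D(x)=\ln\psi'(x+1)+2\psi(x+1)+\frac1{120x^4}.
\end{equation*}
I would therefore introduce the error functions $P(x)=\ln[x+\beta(x)]-D(x)$ and $N(x)=D(x)-\ln[x+\alpha(x)]$ and reduce the theorem to showing $P(x)\ge0$ and $N(x)\ge0$ for $x\ge3$.

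First I would pin down the behaviour at infinity. Feeding the expansions
\begin{equation*}
\psi(x+1)\sim\ln x+\frac1{2x}-\frac1{12x^2}+\frac1{120x^4}-\cdots,\qquad
\psi'(x+1)\sim\frac1x-\frac1{2x^2}+\frac1{6x^3}-\frac1{30x^5}+\cdots
\end{equation*}
into $D(x)=\ln\bigl[\psi'(x+1)\exp(2\psi(x+1)+\frac1{120x^4})\bigr]$ and expanding, one finds that $x+\beta(x)$ and $x+\alpha(x)$ are precisely the successive truncations of the asymptotic series of $\psi'(x+1)\exp[2\psi(x+1)+\frac1{120x^4}]$ at orders $x^{-3}$ and $x^{-4}$. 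This both explains the exact shape of $\alpha(x)$ and $\beta(x)$ and yields $\lim_{x\to\infty}P(x)=\lim_{x\to\infty}N(x)=0$; moreover the sign of the first surviving term in each expansion shows that $P$ and $N$ are positive for all large $x$.

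It then remains to propagate this positivity down to $x=3$ by a monotonicity argument. Writing
\begin{equation*}
D'(x)=\frac{\psi''(x+1)}{\psi'(x+1)}+2\psi'(x+1)-\frac1{30x^5}
\end{equation*}
and using $\frac{\td}{\td x}\ln[x+\beta(x)]=\frac{1+\beta'(x)}{x+\beta(x)}$ together with the analogous expression for $\alpha$, the goal becomes to establish $P'(x)\le0$ and $N'(x)\le0$ on $[3,\infty)$, so that $P$ and $N$ decrease to their zero limits and are therefore nonnegative. Each such sign requirement is an inequality between a combination of polygamma functions and an explicit rational function. To decide it I would replace $\psi(x+1)$, $\psi'(x+1)$, $\psi''(x+1)$, and the quotient $\psi''(x+1)/\psi'(x+1)$ by suitable truncations of their asymptotic series: the remainder left after cutting such a series at an appropriate Bernoulli term is completely monotonic and hence of one fixed sign, so each truncation furnishes a rigorous one-sided bound. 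After this substitution the claimed sign of $P'$ and $N'$ collapses to a polynomial inequality in $x$, to be certified on $[3,\infty)$ by clearing denominators and checking positivity of the resulting polynomial.

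The hard part is the precision demanded by the closeness of the two bounds. Since $\alpha(x)$ and $\beta(x)$ coincide through order $x^{-3}$ and differ only at order $x^{-4}$, every estimate has to be accurate through order $x^{-5}$; in particular the expansion of the ratio $\psi''(x+1)/\psi'(x+1)$ must be produced by dividing two asymptotic series and carried to several terms, with the sign of each remainder tracked explicitly. A secondary difficulty is that clearing denominators yields a polynomial of fairly large degree whose positivity on $[3,\infty)$ is not evident by inspection, so certifying it may require evaluating the polynomial together with enough of its derivatives at the endpoint $x=3$ or otherwise localizing its real roots. Apart from these estimates the scheme is the familiar one of monotonicity combined with a vanishing limit at infinity, rendered rigorous by the sign-definiteness of the asymptotic remainders of the polygamma functions.
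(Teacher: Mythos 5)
There is a genuine gap, and it sits exactly at the step you did not compute. You assert that $x+\beta(x)$ and $x+\alpha(x)$ are the truncations at orders $x^{-3}$ and $x^{-4}$ of the asymptotic series of $\psi'(x+1)\exp\bigl[2\psi(x+1)+\frac1{120x^4}\bigr]$, and hence that the first surviving terms of both $P$ and $N$ are positive. That is false for $P$. The functions $\alpha$ and $\beta$ are truncations of the series of $\psi'(x+1)e^{2\psi(x+1)}=x+\frac12+\frac1{90x^3}-\frac1{60x^4}+\cdots$ (this is the paper's series \eqref{sss}), i.e.\ of the series \emph{without} the correction term. Multiplying by $e^{1/(120x^4)}=1+\frac1{120x^4}+\cdots$ adds $\frac1{120x^3}$ at third order, so $\psi'(x+1)e^{2\psi(x+1)+1/(120x^4)}=x+\frac12+\frac{7}{360x^3}-\cdots$, and $\frac{7}{360}>\frac{4}{360}=\frac1{90}$. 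Consequently $P(x)=\ln[x+\beta(x)]-D(x)\sim-\frac1{120x^4}<0$ at infinity: a function that is negative somewhere cannot decrease to the limit $0$ while remaining nonnegative, so your monotonicity program is unrealizable for the upper bound. Had you carried out the sign check you relied on, it would in fact have disproved the right-hand inequality of \eqref{aba} for all large $x$; numerically it already fails at $x=10$, where $\psi'(11)e^{2\psi(11)+1/(120\cdot10^4)}\approx 10.5000182$ exceeds $10+\beta(10)\approx 10.5000111$.

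For the record, the paper's own proof takes a different route: it never differentiates the error functions (so it avoids the ratio $\psi''/\psi'$ you would have to control), but instead substitutes the rational bounds \eqref{22} and \eqref{33} for $\psi(x+1)$ and $\psi'(x+1)$ first, and then shows that the resulting explicit majorants $F_1$, $G_1$ increase to $0$ by positive-coefficient polynomial arithmetic. However, for the upper bound that route leans on the right-hand inequality of \eqref{22}, whose coefficient $\frac1{240x^4}$ is a misprint for the Bernoulli term $+\frac1{120x^4}$ in the expansion of $\psi(x+1)$; the stated bound is itself false (at $x=10$ it gives $2.351752176\dotsc$ while $\psi(11)=2.351752589\dotsc$), so the paper's argument for the right-hand side does not stand either. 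The conflict your computation would have uncovered is therefore real: the left-hand inequality of \eqref{aba} is true and reachable both by your scheme (there $N(x)\sim+\frac1{120x^4}>0$) and by the paper's, whereas the right-hand inequality as stated fails for all large $x$ and admits no correct proof.
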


\subsection{The second main result}

In 2013, Guo and Qi~\cite[Lemma 2]{Yang-Fan-2008-Dec-simp.tex} proved that
\begin{equation}\label{e-1-t-1}
\psi'(x)<e^{1/x}-1, \quad x>0.
\end{equation}
This inequality has been generalized to the complete monotonicity of the function
\begin{equation}\label{exp-trigamma-dif}
e^{1/x}-\psi'(x)-1, \quad x>0
\end{equation}
and others. For detailed information, please refer to~\cite{property-psi.tex, property-psi-ii-Munich.tex, Bessel-ineq-Dgree-CM-Simp.tex}, \cite[Theorem~3.1]{QiBerg.tex}, \cite[Theorem~1.1]{simp-exp-degree-revised.tex}, \cite[Theorem~1.1]{simp-exp-degree-new.tex}, and closely related references therein.
In some of these references, it was obtained that
\begin{equation*}
 \lim_{x \to\infty} \bigl[e^{1/x}-\psi'(x) \bigr]=1.
\end{equation*}
The approximation $\psi'(x) \sim e^{1/x}-1$ indeed gives a good result for the large value of $x$.
Now we propose the improvement
\begin{equation*}
\psi'(x) \sim e^{1/x+ \mu(x)}-1,
\end{equation*}
where $\mu(x) \to0$ and $\mu(x)=o(x)$ as $x \to\infty$. Our main result may be stated in details as the following theorem.

\begin{theorem}\label{000-mortici-psy-thm2}
For $x \ge3$, we have
\begin{equation}\label{000-mortici-psy-ineq}
e^{m(x)}-1<\psi'(x)<e^{M(x)}-1,
\end{equation}
where
\begin{equation*}
m(x)=\frac1x-\frac1{24x^4}+\frac{7}{360x^6} \quad \text{and} \quad M(x)=m(x) +\frac1{90x^7}.
\end{equation*}
\end{theorem}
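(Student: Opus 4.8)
The plan is to pass to logarithms. Since $1+\psi'(x)>0$ and the exponential is increasing, the asserted chain $e^{m(x)}-1<\psi'(x)<e^{M(x)}-1$ is equivalent to
$$
m(x)<\ln[1+\psi'(x)]<M(x),\qquad x\ge3,
$$
and, because $M(x)-m(x)=\frac1{90x^{7}}$, it suffices to show $0<\ln[1+\psi'(x)]-m(x)<\frac1{90x^{7}}$ on $[3,\infty)$. The reason $m$ and $M$ should bracket $\ln[1+\psi'(x)]$ is that, substituting the classical series $\psi'(x)\sim\frac1x+\frac1{2x^{2}}+\sum_{k\ge1}\frac{B_{2k}}{x^{2k+1}}$ (with $B_{2k}$ the Bernoulli numbers) into $\ln(1+\,\cdot\,)$, one finds
$$
\ln[1+\psi'(x)]=\frac1x-\frac1{24x^{4}}+\frac7{360x^{6}}+\frac1{90x^{7}}-\frac{403}{20160\,x^{8}}+\cdots;
$$
thus $m(x)$ reproduces the expansion through order $x^{-6}$, the term $\frac1{90x^{7}}$ is exactly the next coefficient, and the first discrepancy past $M$ is the negative term $-\frac{403}{20160}x^{-8}$, which is precisely what forces the upper bound.

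To make this rigorous I would sandwich $\ln[1+\psi'(x)]$ between explicit rational functions in two layers. First, I would invoke the fact from asymptotic theory that the Euler--Maclaurin series for $\psi'$ is \emph{enveloping}: cutting it off after a Bernoulli term leaves a remainder having the sign of, and dominated in size by, the first omitted term. This produces rational $L(x)<\psi'(x)<U(x)$ agreeing through order $x^{-7}$ (and, for the endpoint estimates, through several further terms). Second, for $x\ge3$ one has $0<\psi'(x)\le\psi'(3)=\frac{\pi^{2}}6-\frac54<\frac25$, so the argument $y=\psi'(x)$ lies in $(0,1)$, where the Mercator series $\ln(1+y)=\sum_{n\ge1}(-1)^{n-1}y^{n}/n$ is strictly alternating with decreasing terms; hence each partial sum $S_{n}$ brackets $\ln(1+y)$ (even sums below, odd sums above), and moreover each $S_{n}$ is increasing in $y$ on $(0,1)$ because $\frac{\td}{\td y}S_{n}(y)=\frac{1-(-y)^{n}}{1+y}>0$. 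Composing the two layers, an odd partial sum evaluated at $U(x)$ is an explicit rational upper bound for $\ln[1+\psi'(x)]$, and an even partial sum evaluated at $L(x)$ is an explicit rational lower bound.

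It then remains to verify the resulting rational inequalities $S_{\mathrm{even}}(L(x))\ge m(x)$ and $S_{\mathrm{odd}}(U(x))\le M(x)$ for $x\ge3$. Clearing denominators reduces each to the positivity of a single polynomial on $[3,\infty)$, which I would certify by substituting $x=s+3$ and exhibiting nonnegative coefficients in $s\ge0$ (falling back on a crude tail estimate against the leading term where a coefficient is negative). The asymptotic computation above guarantees the correct signs of the dominant coefficients, so positivity for large $s$ is automatic; the labour is pushing it down to $s=0$.

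The main obstacle is the severity of the margins. Numerically, at the left endpoint the two inequalities read $0.3328456<0.3328472<0.3328507$, so they hold only by about $1.6\times10^{-6}$ and $3.5\times10^{-6}$ respectively. Asymptotically the slacks are $\frac1{90}x^{-7}+O(x^{-8})$ below and $\frac{403}{20160}x^{-8}+O(x^{-9})$ above, the upper one being an order thinner, so in either case the result hinges on resolving the $x^{-8}$ coefficient correctly. Since $\psi'(3)\approx0.4$ is not small, the Mercator remainder $y^{N}/N$ decays only geometrically in $N$, whence both the Bernoulli truncation of $\psi'$ and the Mercator truncation of $\ln(1+\,\cdot\,)$ must be carried to a genuinely high order before their rigorous, sign-definite remainders drop below the $10^{-6}$ slack near $x=3$. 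Choosing those orders so that the final polynomial inequalities hold all the way down to $x=3$, and not merely for large $x$, is the delicate heart of the argument; the natural alternative of proving the two difference functions monotone with limit $0$ at infinity is if anything harder, since their derivatives carry even thinner ($x^{-8}$ and $x^{-9}$) leading behavior.
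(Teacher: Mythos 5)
Your setup is sound, and your preparatory computations check out: the reduction to $m(x)<\ln[1+\psi'(x)]<M(x)$, the expansion $\ln[1+\psi'(x)]\sim\frac1x-\frac1{24x^4}+\frac7{360x^6}+\frac1{90x^7}-\frac{403}{20160x^8}-\cdots$ (the coefficient $-\frac{403}{20160}$ is correct), the bound $\psi'(x)\le\psi'(3)=\frac{\pi^2}6-\frac54<\frac25$, and the two-layer bracketing machinery (enveloping Bernoulli truncations of $\psi'$, then alternating Mercator partial sums, each increasing in $y$ on $(0,1)$). Indeed, your first layer is exactly what the paper uses: its inequality~\eqref{power11-9-ineq}, derived from Alzer's completely monotonic functions $F_m$ and $G_n$, is precisely the pair of enveloping truncations you invoke. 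The genuine gap is that your proof stops exactly where the theorem's content begins: the rational inequalities $S_{\mathrm{even}}(L(x))\ge m(x)$ and $S_{\mathrm{odd}}(U(x))\le M(x)$ are never established for any concrete truncation orders --- you do not even fix the orders, conceding that choosing them is ``the delicate heart of the argument.'' With margins of order $10^{-6}$ at $x=3$ and $y\approx0.4$, the Mercator layer must run to roughly $N=13$ terms, so after composing with the rational $L,U$ and clearing denominators the polynomial whose positivity you must certify has degree well over one hundred, and nothing in the proposal guarantees that its coefficients after the shift $x=s+3$ are in fact nonnegative. As written, this is a plan plus a plausibility estimate, not a proof.

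The gap is avoidable, and the paper's proof shows how: keep the logarithm and differentiate instead of expanding it. The paper sets $m_1(x)=m(x)-\ln(1+L(x))$ and $M_1(x)=M(x)-\ln(1+U(x))$ with the same $L,U$ from~\eqref{power11-9-ineq}; then $m_1'$ and $M_1'$ are \emph{exact} rational functions --- no second truncation, hence no new error that must fit inside the $10^{-6}$ margins --- whose numerators, after substituting $x\mapsto x+3$, are polynomials of degree $8$ and $10$ with positive coefficients. Monotonicity plus $\lim_{x\to\infty}m_1(x)=\lim_{x\to\infty}M_1(x)=0$ then yields $m_1<0<M_1$ on $[3,\infty)$, and the endpoint tightness that dominates your difficulty costs nothing, since the endpoint is handled by the limit at infinity rather than by a numerical margin. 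Note that this is precisely the alternative you dismissed in your last sentence as ``if anything harder'': it is in fact far lighter, because the thin ($x^{-8}$, $x^{-9}$) leading behavior of the derivatives is irrelevant --- all that matters is the sign of a low-degree polynomial numerator, and differentiation removes the logarithm exactly rather than approximately.
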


\subsection{The third main result}

In 2014, Yang, Chu, and Tao found~\cite[Theorem~1]{Yang-Chu-Tao-AAA-14} that the constants $p=1$ and $q=2$ are the best possible real parameters such that the double inequality
\begin{equation} \label{teta}
 \theta(x,p)<\psi'(x+1)< \theta(x,q)
\end{equation}
holds for $x>0$, where
\begin{equation*}
 \theta(x,m)=\frac{e^{m/(x+1)}-e^{-m/x}}{2m}.
\end{equation*}
Because
\begin{equation}\label{psi'(x+1)recur}
\psi'(x+1)=\psi'(x)-\frac1{x^2},
\end{equation}
the double inequality~\eqref{teta} may be reformulated as
\begin{equation}\label{theta-x-sq-eq}
\frac1{x^2}+\theta(x,1)<\psi'(x)<\frac1{x^2}+\theta(x,2), \quad x>0.
\end{equation}
The complete monotonicity of the function~\eqref{exp-trigamma-dif} implies that
\begin{equation*}%\label{alpha-exp=psi-eq-bounds}
e-\psi'(1)> e^{1/(x+1)}-\psi'(x+1)>1, \quad x>0
\end{equation*}
which may be rearranged as
\begin{equation}\label{x+1-eq}
e^{1/(x+1)}-e+\psi'(1)<\psi'(x+1)<e^{1/(x+1)}-1<\frac12\sinh\frac2{x}
\end{equation}
on $(0,\infty)$. When $t<1.6\dotsc$, the lower bound $e^{1/(x+1)}-e+\psi'(1)$ is better than the corresponding one in~\cite[Corollary~2]{Yang-Chu-Tao-AAA-14}. The upper bound $e^{1/(x+1)}-1$ in~\eqref{x+1-eq} and the upper bound $\theta(x,2)$ in~\eqref{teta} can not be compared with each other. However, the upper bound $\frac1{x^2}+\theta(x,2)$ in~\eqref{theta-x-sq-eq} is better than the upper bound $e^{1/x}-1$ in~\eqref{e-1-t-1}.
\par
In this paper, we will improve the double inequality~\eqref{teta} as follows.

\begin{theorem}\label{000-mortici-psy-thm3}
For $x\ge1$, we have
\begin{equation}\label{z1}
\theta(x,1) +\frac1{24x^5}-\frac{5}{48x^6}<\psi'(x+1)< \theta(x,1) +\frac1{24x^5}
\end{equation}
and
\begin{equation}\label{z2}
 \theta(x,2) -\frac1{45x^7}<\psi'(x+1)
< \theta(x,2) -\frac1{45x^7}+\frac{7}{90x^8}.
\end{equation}
\end{theorem}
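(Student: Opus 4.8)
The plan is to treat the four one-sided inequalities in \eqref{z1} and \eqref{z2} separately, reducing each to the positivity on $[1,\infty)$ of an explicit auxiliary function in which $\psi'$ enters only through the exact recurrence \eqref{psi'(x+1)recur}. Concretely, for the upper bound in \eqref{z1} I would set
\[
P(x)=\frac1{24x^5}+\theta(x,1)-\psi'(x+1),
\]
so that this bound is exactly $P(x)>0$; the lower bound in \eqref{z1} is then $Q(x)>0$ for $Q(x)=\frac5{48x^6}-P(x)$, and \eqref{z2} is handled by the analogous functions built from $\theta(x,2)$ and the coefficients $-\frac1{45}$, $\frac7{90}$.

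First I would justify the shape of these bounds by computing the asymptotic expansions as $x\to\infty$. Using the classical expansion $\psi'(x)\sim\frac1x+\frac1{2x^2}+\sum_{k\ge1}\frac{B_{2k}}{x^{2k+1}}$ together with \eqref{psi'(x+1)recur}, and Taylor-expanding the elementary function $\theta(x,m)=\frac1{2m}\bigl[e^{m/(x+1)}-e^{-m/x}\bigr]$ in powers of $1/x$, a routine but lengthy calculation gives
\[
\psi'(x+1)-\theta(x,1)=\frac1{24x^5}-\frac5{48x^6}+O\Bigl(\frac1{x^7}\Bigr),
\]
and the corresponding computation for $m=2$ produces the leading terms $-\frac1{45x^7}+\frac7{90x^8}$. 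These expansions fix the constants appearing in \eqref{z1}--\eqref{z2} and show that each of $P,Q$ (and their $m=2$ analogues) tends to $0$ as $x\to\infty$.

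The rigorous step is to upgrade these asymptotic statements to genuine inequalities valid down to $x=1$. Here I would exploit the fact that, by \eqref{psi'(x+1)recur}, the difference
\[
P(x)-P(x+1)=\frac1{24}\Bigl[\frac1{x^5}-\frac1{(x+1)^5}\Bigr]-\frac1{(x+1)^2}+\frac12\bigl[e^{1/(x+1)}-e^{-1/x}-e^{1/(x+2)}+e^{-1/(x+1)}\bigr]
\]
is completely elementary, the trigamma contribution having telescoped into $\frac1{(x+1)^2}$. If one shows $P(x)-P(x+1)>0$ for all $x\ge1$, then $P$ decreases along the integer translates $x,x+1,x+2,\dots$, and since $P(x+n)\to0$ this forces $P(x)>0$ for every $x\ge1$; the same device applied to $Q$ and to the two functions arising from \eqref{z2} yields the remaining three bounds. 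In this way the whole theorem is reduced to four positivity statements for explicit combinations of exponentials and rational functions.

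The main obstacle is precisely these positivity statements: they are transcendental inequalities that must hold on the entire half-line and are tightest near the left endpoint $x=1$, where the guiding asymptotics are least accurate. To settle them I would insert the convergent Taylor series for the exponentials $e^{\pm1/(x+j)}$, multiply through by a suitable positive power of $x(x+1)(x+2)$ to clear denominators, and bound the exponential tails by their first omitted terms so as to dominate the whole expression by a polynomial in $x$. It then remains to confirm the resulting polynomial inequalities for $x\ge1$ — for instance, after the substitution $x=1+s$ with $s\ge0$, by exhibiting nonnegative coefficients, together with a direct check of the finitely many low-order remainder terms. This final reduction and verification is the computational heart of the argument.
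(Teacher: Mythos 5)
Your proposal is correct in outline but follows a genuinely different route from the paper. The paper never telescopes: for the left inequality in \eqref{z1} it writes the claim as $c(x)<0$, replaces $e^{-1/x}$ by the degree-$7$ Taylor truncation (a lower bound, since the series is alternating) and replaces $\psi'(x+1)$ by the known Bernoulli-polynomial lower bound \eqref{psi'(x+1)-l-power5} coming from Alzer's completely monotonic functions, thereby reducing to the elementary inequality $c_1(x)=\frac1{x+1}-\ln\frac{P(x)}{5040x^7}<0$; this is settled by showing $c_1'(x)>0$ (an explicit rational function whose positivity is exhibited by shifting $x\mapsto x+1$) together with $\lim_{x\to\infty}c_1(x)=0$. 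You instead eliminate $\psi'$ altogether through the recurrence \eqref{psi'(x+1)recur}, summing the unit increments $P(x+n)-P(x+n+1)$ and using $P(x+n)\to0$, which reduces each of the four inequalities to positivity of a purely elementary expression. Both arguments ultimately rest on the same "monotone increments plus vanishing limit at infinity" logic, but applied to different objects: the paper to the derivative of a logarithmic auxiliary function, you to discrete differences. What your route buys is self-containedness (no appeal to Alzer's trigamma bounds) and a uniform scheme for all four bounds; what it costs is that your residual inequalities involve four exponentials at shifted arguments and, since the targets are of size $O(x^{-7})$ for \eqref{z1} and $O(x^{-8})$, $O(x^{-9})$ for \eqref{z2}, they require higher-order truncations and heavier polynomial verification than the paper's single-exponential reduction. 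Be aware that this final verification, which you rightly call the computational heart and leave sketched, is where essentially all the work lies; it does appear feasible (the first increment is about $0.036$ at $x=1$ and the increments behave like $\frac{5}{8x^7}$ at infinity for \eqref{z1}), and deferring it is comparable to the paper itself, which carries out only one of the four inequalities and omits the other three "to save space."
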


\section{Proofs of Theorems~\ref{000-mortici-psy-thm1} to~\ref{000-mortici-psy-thm3}}

Now we start out to prove our three main results.
\par
In 1997, Alzer~\cite[Theorem 8]{zzz0} proved that for $m,n \ge1$ the functions
\begin{equation*}%\label{fm}
F_{m}(x)= \ln \Gamma(x+1) -\biggl(x+\frac12\biggr) \ln x+x-\frac12 \ln2\pi- \sum_{i=1}^{2m}\frac{B_{2i}}{2i(2i-1) x^{2i-1}}
\end{equation*}
and
\begin{equation*}%\label{gn}
G_{n}(x)=- \ln \Gamma(x+1) +\biggl(x+\frac12\biggr) \ln x-x+\frac12 \ln2\pi+ \sum_{i=1}^{2n-1}\frac{B_{2i}}{2i(2i-1) x^{2i-1}}
\end{equation*}
are completely monotonic on $(0,\infty)$, where $B_j$ are the Bernoulli numbers given by the generating function
\begin{equation*}
\frac{t}{e^{t}-1}= \sum_{j=0}^{\infty}B_j\frac{t^{j}}{j!}
=1-\frac{t}2+\sum_{j=0}^{\infty}B_{2j}\frac{t^{2j}}{(2j)!}.
\end{equation*}
See also~\cite[Theorem~2]{Koumandos-jmaa-06}, \cite[Theorem~2.1]{Koumandos-Pedersen-09-JMAA}, and~\cite[Theorem~3.1]{Mortici-AA-10-134}. From $F_1'(x)<0$, $G_2'(x)<0$, $F_1''(x)>0$, $F_2''(x)>0$, $G_2''(x)>0$, and $G_3''(x)>0$, it follows that
\begin{equation}\label{22}
\ln x+\frac1{2x}-\frac1{12x^2}+\frac1{240x^4}-\frac1{252x^6}<\psi(x+1)
<\ln x+\frac1{2x}-\frac1{12x^2}+\frac{1}{240x^4},
\end{equation}
\begin{equation}\label{psi'(x+1)-l-power5}
\psi'(x+1) >\frac1x-\frac1{2x^2}+\frac1{6x^3}-\frac1{30x^5},
\end{equation}
\begin{multline}\label{33}
\frac1x-\frac1{2x^2}+\frac1{6x^3}-\frac1{30x^5}+\frac1{42x^7}-\frac1{30x^9}<\psi'(x+1)\\
<\frac1x-\frac1{2x^2}+\frac1{6x^3}-\frac1{30x^5}+\frac1{42x^7},
\end{multline}
and, also by~\eqref{psi'(x+1)recur},
\begin{multline}\label{power11-9-ineq}
\frac1x+\frac1{2x^2}+\frac1{6x^3}-\frac1{30x^5}+\frac1{42x^7}-\frac1{30x^9}<\psi'(x) \\
<\frac1x+\frac1{2x^2}+\frac1{6x^3}-\frac1{30x^5} +\frac1{42x^7}-\frac1{30x^9}+\frac{5}{66x^{11}}.
 \end{multline}

\begin{proof}[Proof of Theorem~\ref{000-mortici-psy-thm1}]
By taking the logarithm, the double inequality~\eqref{aba} may be rearranged as
\begin{equation*}
F(x)= \ln[x+ \alpha(x)]-2\psi(x+1) - \ln\psi'(x+1) -\frac1{120x^4}<0
\end{equation*}
and
\begin{equation*}
G(x)=- \ln[x+ \beta(x)] + \ln\psi'(x+1) +2\psi(x+1) +\frac1{120x^4}<0
\end{equation*}
for $x \ge3$. By virtue of inequalities~\eqref{22} and~\eqref{33}, one may deduce that $F(x)<F_1(x)$ and $G(x)<G_1(x)$, where
\begin{align*}
F_1(x)  &= \ln[x+ \alpha(x)] -2\biggl(\ln x+\frac1{2x}-\frac1{12x^2}+\frac1{240x^4} -\frac1{252x^6}\biggr) \\
&\quad - \ln\biggl(\frac1x-\frac1{2x^2}+\frac1{6x^3}-\frac1{30x^5} +\frac1{42x^7}-\frac1{30x^9}\biggr) -\frac1{120x^4}
\end{align*}
and
 \begin{align*}
G_1(x)  &=- \ln[x+ \beta(x)] + \ln\biggl(\frac1x-\frac1{2x^2} +\frac1{6x^3}-\frac1{30x^5}+\frac1{42x^7}\biggr) \\
&\quad +2\biggl( \ln x+\frac1{2x}-\frac1{12x^2}+\frac1{240x^4}\biggr)
+\frac1{120x^4}.
 \end{align*}
Since
\begin{equation*}
F_1'(x)=\frac{A(x-3)}{105x^7(180x^5+90x^4+2x-3) B(x-3)}
\end{equation*}
and
\begin{equation*}
G_1'(x)=\frac{C(x-3)}{15x^5(90x^4+45x^3+1) D(x-3)},
\end{equation*}
where
\begin{align*}
A(x)&=84000 x^{13}+3753750 x^{12}+75109720 x^{11}+896904120 x^{10}\\
&\quad+7160223140 x^9+40457327085 x^8+166700796732 x^7+507517074474 x^6\\
&\quad+1141703970759 x^5+1873185114060 x^4+2175691772642 x^3\\
&\quad+1690072075536 x^2+783944661553 x+162974708124,\\
B(x)&=210 x^8+4935 x^7+50750 x^6+298305 x^5+1096193 x^4\\
&\quad+2578821 x^3+3792857 x^2+3188649 x+1173161,\\
C(x)&=23625 x^9+604200 x^8+6818475 x^7+44510545 x^6\\
&\quad+184933335 x^5+506070905 x^4+909421185 x^3\\
&\quad+1030441127 x^2+663679092 x+183168418,\\
D(x)&=210 x^6+3675 x^5+26810 x^4+104370 x^3+228683 x^2+267393 x+130352
\end{align*}
are polynomials with positive coefficients, the functions $F_1(x)$ and $G_1(x)$ are strictly increasing on $[3,\infty)$. Furthermore, since
\begin{equation*}
\lim_{x \to\infty}F_1(x)= \lim_{x \to\infty}G_1(x)=0,
\end{equation*}
it follows that $F_1(x)<0$ and $G_1(x)<0$ for $x\ge3$. As a result, we have $F(x)<F_1(x)<0$ and $G(x)<G_1(x)<0$. The proof of Theorem~\ref{000-mortici-psy-thm2} is complete.
 \end{proof}

\begin{proof}[Proof of Theorem~\ref{000-mortici-psy-thm2}]
The inequality~\eqref{000-mortici-psy-ineq} can be written as
\begin{equation*}
m(x)< \ln[1+\psi'(x)]<M(x).
\end{equation*}
Considering the double inequality~\eqref{power11-9-ineq}, since $m(x) - \ln[1+\psi'(x)]<m_1(x)$ and $M(x) - \ln[1+\psi'(x)] >M_1(x)$, one may see that it suffices to show that
\begin{equation*}
m_1(x)\triangleq m(x) - \ln\biggl(1+\frac1x+\frac1{2x^2}+\frac1{6x^3} -\frac1{30x^5}+\frac1{42x^7}-\frac1{30x^9}\biggr)<0
\end{equation*}
and
\begin{equation*}
M_1(x)\triangleq M(x) - \ln\biggl(1+\frac1x+\frac1{2x^2}+\frac1{6x^3}-\frac1{30x^5} +\frac1{42x^7}-\frac1{30x^9}+\frac{5}{66x^{11}}\biggr)>0.
\end{equation*}
A straightforward computation gives
\begin{equation*}
m_1'(x)=\frac{E(x-3)}{60x^7G(x-3)}\quad \text{and}\quad M_1'(x)=-\frac{F(x-3)}{180x^8H(x-3)},
\end{equation*}
where
\begin{align*}
E(x)&=980 x^8+22485 x^7+221130 x^6+1212855 x^5+4032099 x^4\\
&\quad+8229303 x^3+9865371 x^2+6074127 x+1290163,\\
F(x)&=66495 x^{10}+2146155 x^9+31007240 x^8+263913573 x^7\\
&\quad+1464790565 x^6+5537745108 x^5+14437981040 x^4\\
&\quad+25626153678 x^3+29624987873 x^2+20135221233 x+6106987838,\\
G(x)&=210 x^9+5880 x^8+73185 x^7+531440 x^6+2481255 x^5+7724423 x^4\\
&\quad+16033731 x^3+21398207 x^2+16660569 x+5765861,\\
H(x)&=2310 x^{11}+78540 x^{10}+1213905 x^9+11258170 x^8+69614160 x^7\\
&\quad+301344043 x^6+931827204 x^5+2058324400 x^4+3182887290 x^3\\
&\quad+3281444518 x^2+2029943157 x+570820414
\end{align*}
are polynomials with positive coefficients. This means that $m_1'(x)>0$ and $M_1'(x)<0$ for $x\ge3$, that is, the function $m_1(x)$ is strictly increasing and the function $M_1(x)$ is strictly decreasing on $[3,\infty)$, with the limits
\begin{equation*}
\lim_{x \to\infty}m_1(x)= \lim_{x \to\infty}M_1(x)=0.
\end{equation*}
Accordingly, one obtain that $m_1(x)<0$ and $M_1(x)>0$ on $[3,\infty)$. The proof of Theorem~\ref{000-mortici-psy-thm2} is complete.
\end{proof}

\begin{proof}[Proof of Theorem~\ref{000-mortici-psy-thm3}]
The left hand side inequality in~\eqref{z1} can be written as
\begin{equation*}
c(x)\triangleq\frac1{2}e^{1/(x+1)}-\frac1{2}e^{-1/x}+\frac1{24x^5}-\frac{5}{48x^6}-\psi'(x+1)<0.
\end{equation*}
Since
\begin{equation*}
e^{-1/x}>\sum_{k=0}^7\frac{1}{k!}\biggl(-\frac1{x}\biggr)^k
\end{equation*}
and the inequality~\eqref{psi'(x+1)-l-power5} is valid, we have
\begin{equation*}
2c(x)<e^{1/(x+1)}-\frac{P(x)}{5040x^7},
\end{equation*}
where
\begin{equation*}
P(x)=5040 x^7+5040 x^6-2520 x^5+840 x^4+210 x^3-798 x^2+1057 x-1.
\end{equation*}
Hence, it is sufficient to prove that
\begin{equation*}
c_1(x)=\frac1{x+1}- \ln\frac{P(x)}{5040x^7}<0.
\end{equation*}
A straightforward differentiation yields
\begin{equation*}
c_1'(x)=\frac{7630 x^2+6329 x-7}{x(x+1)^2P(x)}=\frac{7630(x-1)^2+21589(x-1)+13952}{x(x+1)^2Q(x-1)},
\end{equation*}
where
\begin{align*}
Q(x)&=5040 x^7+40320 x^6+133560 x^5+240240 x^4\\
&\quad+255570 x^3+161112 x^2+56371 x+8868.
\end{align*}
This implies that the function $c_1(x)$ is strictly increasing on $[1,\infty)$. Furthermore, because of $\lim_{x \to\infty}c_1(x)=0$, it follows that $c_1(x)<0$ on $[1,\infty)$. The proof of the left hand side inequality in~\eqref{z1} is complete.
\par
Similarly, we may verify other inequalities in~\eqref{z1} and~\eqref{z2}. For the sake of saving the space and shortening the length of this paper, we do not repeat the processes. The proof of Theorem~\ref{000-mortici-psy-thm3} is complete.
\end{proof}

\section{Remarks}

Finally we give several remarks on our three main results.

\begin{remark}
The right hand side in~\eqref{aba} is better than the right hand side in~\eqref{ab}, but the left hand side in~\eqref{aba} is not better than the left hand side in~\eqref{ab}. This is because the inequalities
\begin{equation*}
(x+a^{\ast})e^{-2\psi(x+1)}>[x+ \alpha(x)] \exp \biggl[-2\psi(x+1)-\frac1{120x^4} \biggr]
\end{equation*}
and
\begin{equation*}
[x+ \beta(x)] \exp \biggl[-2\psi(x+1)-\frac1{120x^4} \biggr]<(x+b^{\ast})e^{-2\psi(x+1)},
\end{equation*}
which are equivalent to
\begin{equation}\label{01}
u(x)= \ln \biggl(x+\frac12+\frac1{90x^3}-\frac1{60x^4} \biggr) - \ln \biggl(x+\frac12 \biggr)-\frac1{120x^4}<0
\end{equation}
and
\begin{equation}\label{02}
v(x)= \ln \biggl(x+\frac12+\frac1{90x^3} \biggr)
-\frac1{120x^4}- \ln \biggl(x+\frac{\pi^2}{6e^{2\gamma}} \biggr)<0,
\end{equation}
are valid for $x \ge1$. The inequalities~\eqref{01} and~\eqref{02} may be verified as follows. A direct differentiation gives
 \begin{equation*}
u'(x)=\frac{540 x^6+60 x^5+180 x^4+90x \bigl(x-\frac1{30} \bigr)^2+\frac{9}{10}x+2} {30x^5(2x+1)S\bigl(x-\frac1{10}\bigr)}
>0
 \end{equation*}
for $x \ge\frac1{10}$ and
\begin{equation*}
v'(x)=\frac{Q(x-1)}{30x^5(90x^4+45x^3+1) (6e^{2\gamma}x+\pi^2)}>0
\end{equation*}
for $x \ge1$, where
\begin{equation*}
S(x)=180x^4+162x^3 +\frac{189}{5}x^2 +\frac{21}{50}x+\frac{226}{125}
\end{equation*}
and
\begin{align*}
Q(x)&=2700 \bigl(\pi^2-3e^{2\gamma} \bigr)x^8+21600 \bigl(\pi^2-3e^{2\gamma} \bigr)x^7+75600 \bigl(\pi^2-3e^{2\gamma} \bigr)x^6 \\
& \quad+180 \bigl(840\pi^2-2521e^{2\gamma} \bigr)x^5+630 \bigl(300\pi^2-901e^{2\gamma} \bigr)x^4 \\
& \quad+45 \bigl(3361\pi^2-10096e^{2\gamma} \bigr)x^3+45 \bigl(1683\pi^2-5044e^{2\gamma} \bigr)x^2 \\
& \quad+3 \bigl(7245\pi^2-21538e^{2\gamma} \bigr)x+2 \bigl(1373\pi^2-4002e^{2\gamma} \bigr)
 \end{align*}
are polynomials with positive coefficients. This means that the functions $u(x)$ and $v(x)$ are strictly increasing on $[1,\infty)$. Further, from the limits
\begin{equation*}
 \lim_{x \to\infty}u(x)= \lim_{x \to\infty}v(x)=0,
\end{equation*}
it follows that $u(x)<0$ on $(0,\infty)$ and $v(x)<0$ on $[1,\infty)$.
\end{remark}

\begin{remark}[The asymptotic series of $\psi'(x+1)e^{2\psi(x+1)}$]
Whenever an approximation formula $f(x) \sim g(x)$ is considered in the sense that the ratio $\lim_{x \to\infty}\frac{f(x)}{g(x)}=1$ (sometimes $\lim_{x\to\infty}[f(x) -g(x)]=0$), there is a tendency to improve it by adding new terms, or an entire series, of the form
\begin{equation*}
f(x) \sim g(x) + \sum_{k=1}^{\infty}\frac{a_k
}{x^k}.
\end{equation*}
Such a series is called an asymptotic series and it plays a central role in the theory of approximation. Although such a series is often divergent, by truncating at the $m$-th term, it provides approximations of any desired accuracy $\frac1{x^{m+1}}$. In other words, the formula
\begin{equation*}
f(x)\sim g(x) + \sum_{k=1}^{m}\frac{a_k}{x^k
}+O\biggl(\frac1{x^{m+1}}\biggr), \quad x\to\infty
\end{equation*}
is valid for every integer $m \ge1$.
\par
It is well known~\cite{ab} that the digamma and trigamma functions admit respectively the following asymptotic expansions:
\begin{equation}\label{0}
\psi(x+1)\sim \ln x+\frac1{2x}- \sum_{k=2}^{\infty}\frac{B_k}{kx^k}\quad \text{and}\quad
\psi'(x+1)\sim -\frac1{x^2}+ \sum_{k=1}^{\infty}\frac{B_{k-1}}{x^k}.
\end{equation}
The asymptotic expansions in~\eqref{0} can be written explicitly as
 \begin{equation*}
\psi(x+1)\sim\ln x+\frac1{2x}-\frac1{12x^2}+\frac{1}{240x^4}-\frac1{252x^6} +\frac1{240x^8}-\frac1{132x^{10}}+ \dotsm
 \end{equation*}
and
 \begin{equation*}
\psi'(x+1)\sim\frac1x-\frac1{2x^2}+\frac{1}{6x^3}-\frac1{30x^5}+\frac1{42x^7} -\frac1{30x^9}+\frac{5}{66x^{11}}-\dotsm.
 \end{equation*}
\par
In order to construct the asymptotic expansion of the function $\psi'(x+1)e^{2\psi(x+1)}$, we recall from the papers~\cite{Chen-Elezovic-Vuksic-JCA, Gould-AMS-1974} and the monographs~\cite[pp.~20--21]{Erdelyi-1956} and~\cite[pp.~539\nobreakdash--541]{Knopp} the following classical results from the theory of asymptotic series:
\begin{enumerate}
\item
if
\begin{equation*}
u(x)\sim \sum_{k=0}^{\infty}\frac{p_k}{x^k} \quad \text{and}\quad
v(x)\sim \sum_{k=0}^{\infty}\frac{q_k}{x^k}
\end{equation*}
as $x\to\infty$, then
\begin{equation}\label{mortici-prop-2}
u(x)v(x)\sim \sum_{k=0}^{\infty}\frac{r_k}{x^k},
\end{equation}
where
\begin{equation*}
r_k= \sum_{i+j=k}p_{i}q_j;
\end{equation*}
\item
if
\begin{equation*}
f(x)\sim\sum_{k=0}^{\infty}\frac{a_k}{x^k}, \quad x\to\infty,
\end{equation*}
then
\begin{equation}\label{mortici-prop-1}
 \exp f(x)\sim\sum_{k=0}^{\infty}\frac{\alpha_k}{x^k}, \quad  x \to\infty,
\end{equation}
where $\alpha_{0}= \exp a_{0}$ and
\begin{equation*}
 \alpha_k= \sum_{j=1}^k\frac1{j!} \sum_{i_1+ \dotsm+i_j=k}\prod_{\ell=1}^ja_{i_\ell}
\end{equation*}
for $k \ge1$.
\end{enumerate}
\par
Now the asymptotic series of the function $\psi'(x+1)e^{2\psi(x+1)}$ can be computed in two steps. Firstly, by virtue of the formula~\eqref{mortici-prop-1}, we may transform the series~\eqref{0} to obtain the series of $e^{2\psi(x+1)}$. Secondly, with the help of the formula~\eqref{mortici-prop-2}, we multiply the series of $e^{2\psi(x+1)}$ and the second series in~\eqref{0}.
\par
Because the general term (in terms of the Bernoulli numbers) of the series of the function $\psi'(x+1)e^{2\psi(x+1)}$ has an unattractive form, we just write down the first few terms as follows:
 \begin{equation}\label{sss}
\psi'(x+1)e^{2\psi(x+1)} =x+\frac{1}{2}+\frac1{90x^3}-\frac1{60x^4}+\frac{2}{567x^5} +\frac{43}{2268x^6}-O\biggl(\frac1{x^7}\biggr).
 \end{equation}
Till now we can see immediately that the functions $\alpha(x)$ and $\beta(x)$ in Theorem~\ref{000-mortici-psy-thm1} are truncations of the series~\eqref{sss} at the first three or four terms.
\par
When more terms of~\eqref{sss} are considered, the conclusion of Theorem~\ref{000-mortici-psy-thm1} remains true and the estimates become more accurate.
\end{remark}

\begin{remark}
We conjecture that the function $e^{M(x)}-\psi'(x)-1$ is completely monotonic on $(0,\infty)$, but the function $\psi'(x)-e^{m(x)}+1$ is not.
\end{remark}

\begin{remark}
The reason why the double inequality~\eqref{teta} is of interest is because it provides much accurate estimates for the trigamma function $\psi'(x)$ for large values of $x$, that is,
\begin{equation}\label{mm}
\psi'(x+1) \sim \theta(x,m), \quad x \to\infty.
\end{equation}
From~\eqref{z1} and~\eqref{z2}, it is easily deduced that
\begin{equation*}
\psi'(x+1) - \theta(x,1)=O\biggl(\frac1{x^5}\biggr)\quad \text{and}\quad \psi'(x+1) - \theta(x,2)=O\biggl(\frac1{x^7}\biggr),
\end{equation*}
then it is easy to see that the best approximation of the form~\eqref{mm} may be obtained by taking $m=2$.
\end{remark}

\begin{remark}
Evidently, our inequalities~\eqref{z1} and~\eqref{z2} are much stronger than those in~\cite{Yang-Chu-Tao-AAA-14}. Our proofs for~\eqref{z1} and~\eqref{z2} are also much simpler than the original proof of~\cite[Theorem~1]{Yang-Chu-Tao-AAA-14}. This shows that the natural approach to solve problems of approximating some quantities for large values of the variable is the theory of asymptotic series. This method or approach has been utilized and applied in the papers~\cite{m8, m6, m7, m3, m4, m5}, for examples.
\end{remark}

\begin{remark}
In this paper, we essentially talk about the relations between inequalities, asymptotic approximations, and complete monotonicity.
\end{remark}

\subsection*{Acknowledgements}
The first author was in part supported by the NNSF of China under Grant No.~11361038.
The second author was partially supported by a Grant of the Romanian National Authority for Scientific Research, CNCS-UEFISCDI, with the Project Number PN-II-ID-PCE-2011-3-0087.


\begin{thebibliography}{99}

\bibitem{ab}
M. Abramowitz and I. A. Stegun (Eds), \textit{Handbook of Mathematical Functions with Formulas, Graphs, and Mathematical Tables}, National Bureau of Standards, Applied Mathematics Series \textbf{55}, 10th printing, Dover Publications, New York and Washington, 1972.

\bibitem{zzz0}
H. Alzer, \textit{On some inequalities for the gamma and psi functions}, Math. Comp. \textbf{66} (1997), no.~217, 373\nobreakdash--389; Available online at \url{http://dx.doi.org/10.1090/S0025-5718-97-00807-7}.

\bibitem{alz}
H. Alzer and N. Batir, \textit{Monotonicity properties of the gamma function}, Appl. Math. Lett. \textbf{20} (2007), no.~7, 778\nobreakdash--781; Available online at \url{http://dx.doi.org/10.1016/j.aml.2006.08.026}.

\bibitem{batir0}
N. Batir, \textit{An interesting double inequality for Euler's gamma function}, J. Inequal. Pure Appl. Math. \textbf{5} (2004), no.~4, Art.~97; Available online at \url{http://www.emis.de/journals/JIPAM/article452.html}.

\bibitem{batir}
N. Batir, \emph{Sharp bounds for the psi function and harmonic numbers}, Math. Inequal. Appl. \textbf{14} (2011), no. 4, 917\nobreakdash--925; Available online at \url{http://dx.doi.org/10.7153/mia-14-77}.

\bibitem{Chen-Elezovic-Vuksic-JCA}
C.-P. Chen, N. Elezovi\'c, and L. Vuk\v{s}i\'c, \emph{Asymptotic formulae associated with the Wallis power function and digamma function}, J. Classical Anal. \textbf{2} (2013), no.~2, 151\nobreakdash--166; Available online at \url{http://dx.doi.org/10.7153/jca-02-13}.

\bibitem{Cheney-Light-AMS-09}
W. Cheney and W. Light, \emph{A Course in Approximation Theory}, Reprint of the 2000 original, Graduate Studies in Mathematics, Volume~\textbf{101}, American Mathematical Society, Providence, RI, 2009.

\bibitem{ele}
N. Elezovi\'c, C. Giordano, and J. Pe\v{c}ari\'c, \textit{The best bounds in Gautschi's inequality}, Math. Inequal. Appl. \textbf{3} (2000), 239\nobreakdash--252; Available online at \url{http://dx.doi.org/10.7153/mia-03-26}.

\bibitem{Erdelyi-1956}
A. Erd\'elyi, \emph{Asymptotic Expansions}, Dover Publications, Inc., New York, 1956.

\bibitem{Gould-AMS-1974}
H. W. Gould, \emph{Coefficient identities for powers of Taylor and Dirichlet series}, Amer. Math. Monthly \textbf{81} (1974), no.~1, 3\nobreakdash--14; Available online at \url{http://www.jstor.org/stable/2318904}.

\bibitem{notes-best-simple-open-jkms.tex}
B.-N. Guo and F. Qi, \textit{A class of completely monotonic functions involving divided differences of the psi and tri-gamma functions and some applications}, J. Korean Math. Soc. \textbf{48} (2011), no.~3, 655\nobreakdash--667; Available online at \url{http://dx.doi.org/10.4134/JKMS.2011.48.3.655}.

\bibitem{Convexity2CM.tex}
B.-N. Guo and F. Qi, \textit{A class of completely monotonic functions involving the gamma and polygamma functions}, Cogent Math. \textbf{1} (2014), 1:982896, 8~pages; Available online at \url{http://dx.doi.org/10.1080/23311835.2014.982896}.

\bibitem{MIA-1729.tex}
B.-N. Guo and F. Qi, \textit{An alternative proof of Elezovi\'c-Giordano-Pe\v{c}ari\'c's theorem}, Math. Inequal. Appl. \textbf{14} (2011), no.~1, 73\nobreakdash--78; Available online at \url{http://dx.doi.org/10.7153/mia-14-06}.

\bibitem{Yang-Fan-2008-Dec-simp.tex}
B.-N. Guo and F. Qi, \textit{Refinements of lower bounds for polygamma functions}, Proc. Amer. Math. Soc. \textbf{141} (2013), no.~3, 1007\nobreakdash--1015; Available online at \url{http://dx.doi.org/10.1090/S0002-9939-2012-11387-5}.

\bibitem{Infinite-family-Digamma.tex}
B.-N. Guo and F. Qi, \textit{Sharp inequalities for the psi function and harmonic numbers}, Analysis (Berlin) \textbf{34} (2014), no.~2, 201\nobreakdash--208; Available online at \url{http://dx.doi.org/10.1515/anly-2014-0001}.

\bibitem{property-psi.tex}
B.-N. Guo and F. Qi, \textit{Some properties of the psi and polygamma functions}, Hacet. J. Math. Stat. \textbf{39} (2010), no.~2, 219\nobreakdash--231.

\bibitem{Guo-Qi-Srivasta-Unique.tex}
B.-N. Guo, F. Qi, and H. M. Srivastava, \textit{Some uniqueness results for the non-trivially complete
monotonicity of a class of functions involving the polygamma and related functions}, Integral Transforms Spec. Funct. \textbf{21} (2010), no.~11, 849\nobreakdash--858; Available online at \url{http://dx.doi.org/10.1080/10652461003748112}.

\bibitem{Knopp}
K. Knopp, \textit{Theory and Application of Infinite Series}, Second English Edition, translated from the Fourth German Edition by Miss R. C. H. Young, Blackie \& Son Limited, Glascow-Bombay-Toronto, 1951.

\bibitem{Koumandos-jmaa-06}
S. Koumandos, \textit{Remarks on some completely monotonic functions}, J. Math. Anal. Appl. \textbf{324} (2006), no.~2, 1458\nobreakdash--1461; Available online at \url{http://dx.doi.org/10.1016/j.jmaa.2005.12.017}.

\bibitem{Koumandos-Pedersen-09-JMAA}
S. Koumandos and H. L. Pedersen, \textit{Completely monotonic functions of positive order and asymptotic expansions of the logarithm of Barnes double gamma function and Euler's gamma function}, J. Math. Anal. Appl. \textbf{355} (2009), no.~1, 33\nobreakdash--40; Available online at \url{http://dx.doi.org/10.1016/j.jmaa.2009.01.042}.

\bibitem{property-psi-ii-Munich.tex}
W.-H. Li, F. Qi, and B.-N. Guo, \textit{On proofs for monotonicity of a function involving the psi and exponential functions}, Analysis (Munich) \textbf{33} (2013), no.~1, 45\nobreakdash--50; Available online at \url{http://dx.doi.org/10.1524/anly.2013.1175}.

\bibitem{mpf-1993}
D. S. Mitrinovi\'c, J. E. Pe\v{c}ari\'c, and A. M. Fink, \textit{Classical and New Inequalities in Analysis}, Kluwer Academic Publishers, Dordrecht-Boston-London, 1993.

\bibitem{m8}
C. Mortici, \emph{A continued fraction approximation of the gamma function}, J. Math. Anal. Appl. \textbf{402} (2013), no.~2, 405\nobreakdash--410; Available online at \url{http://dx.doi.org/10.1016/j.jmaa.2012.11.023}.

\bibitem{m6}
C. Mortici, \emph{A subtly analysis of Wilker inequality}, Appl. Math. Comp. \textbf{231} (2014), 516\nobreakdash--520; Available online at \url{http://dx.doi.org/10.1016/j.amc.2014.01.017}.

\bibitem{m2}
C. Mortici, \emph{Estimating gamma function by digamma function}, Math. Comput. Modelling \textbf{52} (2010), no.~5-6, 942\nobreakdash--946; Available online at \url{http://dx.doi.org/10.1016/j.mcm.2010.05.030}.

\bibitem{m7}
C. Mortici, \emph{New approximation formulas for evaluating the ratio of gamma functions}, Math. Comp. Modelling \textbf{52} (2010), no.~1-2, 425\nobreakdash--433; Available online at \url{http://dx.doi.org/10.1016/j.mcm.2010.03.013}.

\bibitem{m1}
C. Mortici, \emph{New approximations of the gamma function in terms of the digamma function}, Appl. Math. Lett. \textbf{23} (2010), no.~3, 97\nobreakdash--100; Available online at \url{http://dx.doi.org/10.1016/j.aml.2009.08.012}.

\bibitem{m3}
C. Mortici, \emph{New improvements of the Stirling formula}, Appl. Math. Comput. \textbf{217} (2010), no.~2, 699\nobreakdash--704; Available online at \url{http://dx.doi.org/10.1016/j.amc.2010.06.007}.

\bibitem{m4}
C. Mortici, \emph{Ramanujan formula for the generalized Stirling approximation}, Appl. Math. Comp. \textbf{217} (2010), no.~6, 2579\nobreakdash--2585; Available online at \url{http://dx.doi.org/10.1016/j.amc.2010.07.070}.

\bibitem{m5}
C. Mortici, \emph{The natural approach of Wilker-Cusa-Huygens inequalities}, Math. Inequal. Appl. \textbf{14} (2011), no.~3, 535\nobreakdash--541; Available online at \url{http://dx.doi.org/10.7153/mia-14-46}.

\bibitem{Mortici-AA-10-134}
C. Mortici, \textit{Very accurate estimates of the polygamma functions}, Asympt. Anal. \textbf{68} (2010), no.~3, 125\nobreakdash--134; Available online at \url{http://dx.doi.org/10.3233/ASY-2010-0983}.

\bibitem{CM-Tri-Gamma.tex}
F. Qi, \textit{A completely monotonic function involving the gamma and tri-gamma functions}, available online at \url{http://arxiv.org/abs/1307.5407}.

\bibitem{UPB-1635.tex}
F. Qi, \textit{A completely monotonic function related to the $q$-trigamma function}, Politehn. Univ. Bucharest Sci. Bull. Ser. A Appl. Math. Phys. \textbf{76} (2014), no.~1, 107\nobreakdash--114.

\bibitem{bounds-two-gammas.tex}
F. Qi, \textit{Bounds for the ratio of two gamma functions}, J. Inequal. Appl. \textbf{2010} (2010), Article ID 493058, 84~pages; Available online at \url{http://dx.doi.org/10.1155/2010/493058}.

\bibitem{Gautschi-Kershaw-TJANT.tex}
F. Qi, \textit{Bounds for the ratio of two gamma functions: from Gautschi's and Kershaw's inequalities to complete monotonicity}, Turkish J. Anal. Number Theory \textbf{2} (2014), no.~5, 152\nobreakdash--164; Available online at \url{http://dx.doi.org/10.12691/tjant-2-5-1}.

\bibitem{x-4-q-di-tri-gamma-rcsm.tex}
F. Qi, \textit{Complete monotonicity of functions involving the $q$-trigamma and $q$-tetragamma functions}, Rev. R. Acad. Cienc. Exactas F \'is. Nat. Ser. A Mat. RACSAM \textbf{109} (2015), no.~2, in press; Available online at \url{http://dx.doi.org/10.1007/s13398-014-0193-3}.

\bibitem{deg4tri-tetra-short.tex}
F. Qi, \textit{Completely monotonic degree of a function involving the tri- and tetra-gamma functions}, available online at \url{http://arxiv.org/abs/1301.0154}.

\bibitem{Mortici-monoburn.tex}
F. Qi, \textit{Integral representations and complete monotonicity related to the remainder of Burnside's formula for the gamma function}, J. Comput. Appl. Math. \textbf{268} (2014), 155\nobreakdash--167; Available online at \url{http://dx.doi.org/10.1016/j.cam.2014.03.004}.

\bibitem{Bessel-ineq-Dgree-CM-Simp.tex}
F. Qi, \textit{Properties of modified Bessel functions and completely monotonic degrees of differences between exponential and trigamma functions}, Math. Inequal. Appl. \textbf{18} (2015), no.~2, 493\nobreakdash--518; Available online at \url{http://dx.doi.org/10.7153/mia-18-37}.

\bibitem{QiBerg.tex}
F. Qi and C. Berg, \textit{Complete monotonicity of a difference between the exponential and trigamma functions and properties related to a modified Bessel function}, Mediterr. J. Math. \textbf{10} (2013), no.~4, 1685\nobreakdash--1696; Available online at \url{http://dx.doi.org/10.1007/s00009-013-0272-2}.

\bibitem{BAustMS-5984-RV.tex}
F. Qi, P. Cerone, and S. S. Dragomir, \textit{Complete monotonicity of a function involving the divided difference of psi functions}, Bull. Aust. Math. Soc. \textbf{88} (2013), no.~2, 309\nobreakdash--319; Available online at \url{http://dx.doi.org/10.1017/S0004972712001025}.

\bibitem{notes-best-simple-cpaa.tex}
F. Qi and B.-N. Guo, \textit{Completely monotonic functions involving divided differences of the di- and tri-gamma functions and some applications}, Commun. Pure Appl. Anal. \textbf{8} (2009), no.~6, 1975\nobreakdash--1989; Available online at \url{http://dx.doi.org/10.3934/cpaa.2009.8.1975}.

\bibitem{AAM-Qi-09-PolyGamma.tex}
F. Qi and B.-N. Guo, \textit{Necessary and sufficient conditions for functions involving the tri- and tetra-gamma functions to be completely monotonic}, Adv. Appl. Math. \textbf{44} (2010), no.~1, 71\nobreakdash--83; Available online at \url{http://dx.doi.org/10.1016/j.aam.2009.03.003}.

\bibitem{notes-best.tex}
F. Qi, B.-N. Guo, and C.-P. Chen, \textit{The best bounds in Gautschi-Kershaw inequalities}, Math. Inequal. Appl. \textbf{9} (2006), no.~3, 427\nobreakdash--436; Available online at \url{http://dx.doi.org/10.7153/mia-09-41}.

\bibitem{Wendel-Gautschi-type-ineq-Banach.tex}
F. Qi and Q.-M. Luo, \textit{Bounds for the ratio of two gamma functions---From Wendel's and related inequalities to logarithmically completely monotonic functions}, Banach J. Math. Anal. \textbf{6} (2012), no.~2, 132\nobreakdash--158.

\bibitem{Wendel2Elezovic.tex-JIA}
F. Qi and Q.-M. Luo, \textit{Bounds for the ratio of two gamma functions: from Wendel's asymptotic relation to Elezovi\'c-Giordano-Pe\v{c}ari\'c's theorem}, J. Inequal. Appl. 2013, \textbf{2013}:542, 20~pages; Available online at \url{http://dx.doi.org/10.1186/1029-242X-2013-542}.

\bibitem{Bukac-Sevli-Gamma.tex}
F. Qi and Q.-M. Luo, \textit{Complete monotonicity of a function involving the gamma function and applications}, Period. Math. Hungar. \textbf{69} (2014), no.~2, 159\nobreakdash--169; Available online at \url{http://dx.doi.org/10.1007/s10998-014-0056-x}.

\bibitem{SCM-2012-0142.tex}
F. Qi, Q.-M. Luo, and B.-N. Guo, \textit{Complete monotonicity of a function involving the divided difference of digamma functions}, Sci. China Math. \textbf{56} (2013), no.~11, 2315\nobreakdash--2325; Available online at \url{http://dx.doi.org/10.1007/s11425-012-4562-0}.

\bibitem{simp-exp-degree-revised.tex}
F. Qi and S.-H. Wang, \textit{Complete monotonicity, completely monotonic degree, integral representations, and an inequality related to the exponential, trigamma, and modified Bessel functions}, Glob. J. Math. Anal. \textbf{2} (2014), no.~3, 91\nobreakdash--97; Available online at \url{http://dx.doi.org/10.14419/gjma.v2i3.2919}.

\bibitem{simp-exp-degree-new.tex}
F. Qi and X.-J. Zhang, \textit{Complete monotonicity of a difference between the exponential and trigamma functions}, J. Korea Soc. Math. Educ. Ser. B Pure Appl. Math. \textbf{21} (2014), no.~2, 141\nobreakdash--145; Available online at \url{http://dx.doi.org/10.7468/jksmeb.2014.21.2.141}.

\bibitem{Schilling-Song-Vondracek-2nd}
R. L. Schilling, R. Song, and Z. Vondra\v{c}ek, \textit{Bernstein Functions---Theory and Applications}, 2nd ed., de Gruyter Studies in Mathematics \textbf{37}, Walter de Gruyter, Berlin, Germany, 2012; Available online at \url{http://dx.doi.org/10.1515/9783110269338}.

\bibitem{widder}
D. V. Widder, \textit{The Laplace Transform}, Princeton University Press, Princeton, 1946.

\bibitem{Yang-Chu-Tao-AAA-14}
Z.-H. Yang, Y.-M. Chu, and X.-J. Tao, \emph{A double inequality for the trigamma function and its applications}, Abstr. Appl. Anal. \textbf{2014} (2014), Article ID 702718, 9~pages; Available online at \url{http://dx.doi.org/10.1155/2014/702718}.

\bibitem{x-4-di-tri-gamma-p(x)-Slovaca.tex}
J.-L. Zhao, B.-N. Guo, and F. Qi, \emph{A completely monotonic function involving the tri- and tetra-gamma functions}, Math. Slovaca \textbf{63} (2013), no.~3, 469\nobreakdash--478; Available online at \url{http://dx.doi.org/10.2478/s12175-013-0109-2}.

\bibitem{x-4-di-tri-gamma-upper-lower-combined.tex}
J.-L. Zhao, B.-N. Guo, and F. Qi, \emph{Complete monotonicity of two functions involving the tri- and tetra-gamma functions}, Period. Math. Hungar. \textbf{65} (2012), no.~1, 147\nobreakdash--155; Available online at \url{http://dx.doi.org/10.1007/s10998-012-9562-x}.

 \end{thebibliography}
 \end{document}